\documentclass[12pt,a4paper,oneside]{amsart}
\usepackage{amsfonts,amsmath,amssymb,amsthm}
\usepackage{mathtools}
\usepackage{anysize}
\marginsize{2cm}{2cm}{1.5cm}{1.5cm}
\usepackage[utf8]{inputenc}
\usepackage[english]{babel}
\usepackage{cmap}
\usepackage{enumerate}
\usepackage{xcolor}
\usepackage{hyperref}
\hypersetup{
  colorlinks = true,
  urlcolor   = blue,
  linkcolor  = blue,
  citecolor  = red
}

\newcommand{\Href}[2]{\hyperref[#2]{#1~\ref{#2}}}
\newcommand{\st}{:\;}

\newcommand{\norm}[1]{\left\|#1\right\|}
\newcommand{\enorm}[1]{\left|#1\right|}
\newcommand{\normsch}[2][p]{\norm{#2}_{\mathrm{S}_{#1}}}

\newcommand{\iprod}[2]{\left\langle#1,#2\right\rangle}

\providecommand{\parenth}[1]{\left(#1\right)}
\providecommand{\braces}[1]{\left\{#1\right\}}

\providecommand{\abs}[1]{\lvert#1\rvert}
\providecommand{\card}[1]{\lvert#1\rvert}

\def\N{\mathbb N}
\def\R{\mathbb R}
\def\C{\mathbb C}
\newcommand{\hilbert}{\mathcal{H}}

\DeclareMathOperator{\mglname}{\varrho}
\newcommand{\mgl}[2]{\mglname\nolimits_{#1}\!\left( #2 \right)}
\newcommand{\mglx}[1]{\mglname\nolimits_{X}\!\!\!\:\left( #1 \right)}

\DeclareMathOperator{\mconame}{\delta}
\newcommand{\mco}[2]{\mconame\nolimits_{#1}\!\left( #2 \right)}
\newcommand{\mcox}[1]{\mconame\nolimits_{X}\!\left( #1 \right)}

\newcommand{\ball}[1]{\mathbf{B}^{#1}}
\newcommand{\di}{\,\mathrm{d}}
\newcommand{\id}{\mathrm{Id}}

\theoremstyle{theorem}
\newtheorem{thm}{Theorem}[section]
\newtheorem{prp}{Proposition}[section]
\newtheorem{lem}{Lemma}[section]

\newtheorem{conj}{Conjecture}[section]

\theoremstyle{definition}
\newtheorem{dfn}{Definition}[section]
\newtheorem{rem}[dfn]{Remark}

\def\diam{\mathop{\rm diam}}
\newcommand{\dist}[2]{\operatorname{dist}\!\left( #1, #2 \right)}%

\DeclareMathOperator{\Enc}{Enc}
\DeclareMathOperator{\Dec}{Dec}

\newtheorem{Problem}{Problem}[section]

\title{No-dimensional results of combinatorial convexity. Dimension strikes back}

\author{Grigory Ivanov\address{Grigory Ivanov: 
Pontifícia Universidade Cat\'olica do Rio de Janeiro \\
Departamento de Matematica,
Rua Marquês de São Vicente, 225\\
Edif{\'i}cio Cardeal Leme, sala 862,
22451-900 G{\'a}vea, Rio de Janeiro, Brazil}
\email{grimivanov@gmail.com}}
\thanks{The author is supported by Projeto Paz and Coordenacao de Aperfeicoamento de Pessoal de Nivel Superior - Brasil (CAPES) - 23038.015548/2016-06}
\subjclass[2020]{52A35 (primary), 46B20, 46B09, 81P45, 15A60}
\keywords{ Johnson--Lindenstrauss-type sketching, Chebyshev regression,
no-dimensional Helly theorem, no-dimensional Carath\'eodory theorem, Schatten classes, $\ell_1$-minimization}

\date{\today}

\begin{document}
\begin{abstract}
We discuss no-dimensional (approximate) versions of Carath\'eodory's and Helly's theorems.
Our goal is to draw attention to open problems and potential applications related to these results.
We survey recent progress and pose several questions.
We also point out a simple way to ``bring the dimension back into the picture'': by combining no-dimensional statements with dimension-dependent norm comparisons, one can transfer problems in $\ell_1^d$, $\ell_\infty^d$, and Schatten classes $S_1, S_\infty$ to nearby $\ell_p^d$ or $S_p$ spaces with better geometry.

As elementary applications, we obtain
a weak additive analogue of the Johnson--Lindenstrauss flattening lemma,
local-to-global estimates for Chebyshev regression over the $\ell_1$ ball,
and a local-to-global guarantee for quantum feasibility from locally consistent linear measurements.
\end{abstract}

\maketitle
\section{Introduction}

The theorems of Helly~\cite{helly1923mengen} and Carath\'eodory~\cite{caratheodory1911variabilitatsbereich} are two celebrated results that form part of the foundation of modern convexity and its adjacent areas.
At a basic level, these theorems reveal fundamental combinatorial and geometric properties of convex sets in finite-dimensional linear spaces and, in particular, they can be used to characterize the dimension of the ambient space.
Both results have given rise to extensive and largely independent lines of research; we refer to~\cite{barany2022helly, barany2021combinatorial} for recent surveys.

In a breakthrough paper~\cite{adiprasito2020theorems}, Adiprasito, B\'ar\'any, Mustafa, and Terpai obtained \emph{no-dimensional} (or \emph{approximate}) versions of Helly's and Carath\'eodory's theorems in Euclidean spaces.
These results establish a striking connection between combinatorial convexity and approximation theory: instead of exact intersection/containment statements whose quantitative form depends on the dimension,
one gets approximate conclusions controlled by a single natural parameter.
Motivated by potential applications to algorithmic questions (such as linear sketching and local-to-global guarantees in optimization), it is natural to ask for analogous results beyond Euclidean spaces.

The goal of this note is to bring attention to this circle of problems.
We formulate several open questions and record some straightforward corollaries of the no-dimensional results in Banach spaces.
At the same time, we make one additional tweak and ``return the dimension to the picture'': we combine no-dimensional results with a standard substitution trick from Banach space theory,
which allows one to pass from $\ell_1$-type or $\ell_\infty$-type geometry to an appropriately chosen $\ell_p$-space with desired geometric properties.

We will treat the needed Banach space facts as a black box.
For convenience, we recall all relevant definitions and inequalities in \Href{Section}{sec:banach_space_likbez}, in order to keep the paper self-contained.
Then we explain in detail what we mean by no-dimensional versions of Helly's and Carath\'eodory's theorems, survey recent results on the topic, and formulate several open problems.
Finally, we describe a standard substitution trick and show that a number of ``local-to-global'' statements become straightforward corollaries of the no-dimensional theory.

\subsection{Applications of no-dimensional results}

In many applications \cite{Garofalakis2013,Wang2016} one stores or transmits large objects
(signals, images, fields), while the downstream task uses only pairwise similarity queries,
often measured by the squared $L_2$-distance (equivalently, the $L_2$-energy).
The following theorem shows that, for a fixed collection of $n$ signals, one can replace each
signal by a short ``fingerprint'' in $\R^k$ for sufficiently small $k$, while preserving all
pairwise squared $L_2$-distances up to additive error~$\varepsilon$.

The result can be viewed as a weak additive variant of the celebrated Johnson--Lindenstrauss
flattening lemma \cite{johnson1984extensions}; see also \cite{vershynin2018high} for modern
expositions. While the classical Johnson--Lindenstrauss lemma is typically proved by a random
projection argument, our proof is based on a deterministic construction arising from a
no-dimensional Carath\'eodory-type lemma. This provides a simple illustration of the method.

\begin{thm}[Communication-efficient similarity search for signals]\label{thm:jl_signals_receiver}
Fix $d\ge 1$ and $n \ge 8,$ and let $f_1,\dots,f_n\colon [0,1]^d\to[-1,1]$ be continuous.
A \emph{sketching protocol} consists of an encoding map
\[
\Enc\colon C([0,1]^d)\to\R^k
\]
and a decoding rule
\[
\Dec\colon \R^k\times \R^k\to\R,
\]
where $\Dec(\Enc(f_i),\Enc(f_j))$ is interpreted as an estimate of the squared $L_2$-distance
\[
D(i,j):=\int_{[0,1]^d}\abs{f_i(x)-f_j(x)}^2\,\di x.
\]

Then for every $\varepsilon\in(0,1)$ there exist $k\le C\,\frac{\ln n}{\varepsilon^2}$ and maps $\Enc,\Dec$
such that for all $i,j\in[n]$,
\[
\abs{\Dec(\Enc(f_i),\Enc(f_j)) - D(i,j)} \le \varepsilon.
\]
Moreover, one may take $\Dec(u,v)=\enorm{u-v}^2$ (i.e.\ decoding is just squared Euclidean distance $\enorm{\cdot}^2$) and $\Enc$ to be linear.
\end{thm}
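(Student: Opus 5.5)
The plan is to realise $\Enc$ as normalised point evaluation at a well-chosen multiset of sample points, $\Enc(f)=k^{-1/2}\,(f(x_1),\dots,f(x_k))$. This map is linear, and with $\Dec(u,v)=\enorm{u-v}^2$ it gives
\[
\Dec(\Enc(f_i),\Enc(f_j))=\frac1k\sum_{s=1}^k g_{ij}(x_s),\qquad g_{ij}:=(f_i-f_j)^2\in[0,4].
\]
So the task reduces to choosing $k$ points whose empirical average of $g_{ij}$ approximates $\int g_{ij}=D(i,j)$ within $\varepsilon$, simultaneously for all $N\le n^2$ pairs. To encode all pairs at once, I form the vector-valued map $v\colon[0,1]^d\to\R^N$, $v(x)=(g_{ij}(x))_{i,j}$, with $\norm{v(x)}_\infty\le 4$. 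The target is $\bar v=\int v(x)\di x$, and what I need is an average of $k$ points of $\{v(x)\}$ that is $\varepsilon$-close to $\bar v$ in $\ell_\infty^N$. This is exactly an approximate (no-dimensional) Carath\'eodory statement in $\ell_\infty^N$.

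First, I check that $\bar v$ lies in the closed convex hull of $K=\{v(x)\st x\in[0,1]^d\}$. By uniform continuity, Riemann sums of $v$ are convex combinations of points of $K$ and converge to $\bar v$. So it suffices to approximate some such convex combination $w$ with $\norm{w-\bar v}_\infty\le\varepsilon/2$, at a cost of $\varepsilon/2$ in the final error.

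Second, I apply the substitution trick: pass from $\ell_\infty^N$ to $\ell_p^N$ with $p\asymp\ln N$, using
\[
\norm{y}_\infty\le\norm{y}_p\le N^{1/p}\norm{y}_\infty\le e\norm{y}_\infty .
\]
The space $\ell_p$ has type-$2$ constant of order $\sqrt{p}$. The no-dimensional Carath\'eodory lemma (Maurey's lemma) for a type-$2$ space with constant $T_2$ then yields an average of $k$ points of $K$, with repetitions allowed, within $\ell_p$-distance $C\,T_2\,\diam_p(K)/\sqrt{k}$ of $w$. Centering at $w$ improves the constant. Since $\diam_p(K)\le e\cdot 8$, the $\ell_\infty$-error is at most $C'\sqrt{\ln N}/\sqrt{k}$. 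Choosing $k\asymp \ln(n^2)/\varepsilon^2\le C\ln n/\varepsilon^2$ makes this at most $\varepsilon/2$. Repeated points are harmless: they simply become repeated coordinates of $\Enc$, which is exactly the multiset average above. The hypothesis $n\ge 8$ is only used so that $\ln n$ dominates the constants coming from $\ln N$ and $p\ge2$.

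The main obstacle is the correct invocation of the Banach-space Maurey lemma with the explicit dependence $T_2(\ell_p)\lesssim\sqrt{p}$, including the symmetrization argument with a Rademacher average. This step has to be carried out carefully so that $k$ ends up scaling as $\ln n/\varepsilon^2$ with an absolute constant. If the version of the lemma stated earlier in the paper is phrased in terms of modulus of smoothness rather than type, I would instead use the $2$-uniform smoothness of $\ell_p$, with constant $\sim\sqrt{p-1}$, which gives the same bound.
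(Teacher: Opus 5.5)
Your proposal is correct and follows essentially the same route as the paper. The paper also forms $v(\omega)=(\abs{f_i(\omega)-f_j(\omega)}^2)_{i<j}$, passes from $\ell_\infty^{\binom n2}$ to $\ell_p$ with $p=\ln\binom n2$, applies the no-dimensional Carath\'eodory lemma, and encodes $f_i$ as $k^{-1/2}(f_i(\omega_1),\dots,f_i(\omega_k))$. There are two differences. Your primary tool is the probabilistic Maurey lemma via the type-$2$ constant of $\ell_p$, whereas the paper uses the deterministic greedy, modulus-of-smoothness version (Lemma~\ref{lem:Caratheodory_sequence_l_p}), which is your stated fallback. You also make explicit two things the paper leaves implicit: the centering and rescaling into the unit ball, and the fact that $\bar v$ lies in the (closed) convex hull of the points $v(x)$.
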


We emphasize that this result is much weaker than the Johnson--Lindenstrauss flattening lemma.
First, we only obtain an additive approximation to the squared $L_2$-distance. Second, the range
of the functions is a fixed interval, so the supremum norm is uniformly bounded (and we exploit
this in the proof). Third, the functions are defined on a domain of measure one, which is also
important for our argument since we work on a probability space.

The main point is that the statement admits a deterministic proof based on a no-dimensional
Carath\'eodory lemma.

Also, we obtain several local-to-global estimates for $\ell_1$-minimization problems using no-dimensional Helly-type results.
We write $[m]:=\braces{1,\dots,m}$ for $m\in\N$, and we equip $\R^d$ with the standard $\ell_p$ norm $\norm{\cdot}_p$.
We denote by $\ell_p^d:=(\R^d,\norm{\cdot}_p)$ the resulting normed space and by $\ball{}_{\ell_p^d}$ its unit ball.

\medskip
One basic example is \emph{Chebyshev regression} (also known as \emph{minimax absolute residual fitting} or $\ell_\infty$-regression), the classical problem of fitting a linear model by minimizing the worst absolute residual:
\[
\min_{x\in\R^d}\ \max_{i\in[m]} \abs{\iprod{a_i}{x}-b_i}.
\]
It admits an immediate linear programming formulation by introducing an auxiliary variable $t$ and enforcing the two-sided constraints
$-t\le \iprod{a_i}{x}-b_i\le t$ for all $i\in[m]$; see, e.g., \cite{boyd2004convex,ben2001lectures}.
From an algorithmic viewpoint, this connects Chebyshev regression to the general toolbox of convex optimization and linear programming, while its geometry is governed by small active constraint sets, a perspective formalized in the LP-type framework and its randomized algorithms \cite{clarkson1995vegas,matouvsek1992subexponential}.
More recently, Chebyshev/$\ell_\infty$ guarantees have also been studied via modern randomized linear algebra and ``sketch-and-solve'' methods, where one seeks fast approximate solutions together with explicit $\ell_\infty$ control \cite{price2017fast,song2023nearly}.
In the present work, we pursue a different direction: using no-dimensional Helly-type results to derive \emph{local-to-global} approximation guarantees from $k$-wise feasibility information, with explicit dimension dependence.

Our first local-to-global statement shows that approximate global feasibility over the $\ell_1$ ball follows from approximate feasibility on all $k$-subcollections of constraints, with an explicit  dependence on the dimension.

\begin{thm}[Local-to-global Chebyshev regression]\label{thm:chebyshev_ball}
Let $a_1,\dots,a_m\in [-1,1]^d$ and $b_1,\dots,b_m\in \R$.
Fix $k\in[m]$ and parameters $R\ge 1$ and $r\ge 0$.
Suppose that for every subset $J\subset[m]$ with $\card{J}=k$ there exists a point $x_J\in R\ball{}_{\ell_1^d}$ such that
\[
\max_{j\in J}\abs{\iprod{a_j}{x_J}-b_j}\ \le\ r.
\]
Then there exists a point $x\in eR\ball{}_{\ell_1^d}$ such that
\[
\max_{i\in[m]}\abs{\iprod{a_i}{x}-b_i}
\ \le\
r + 21\,R\,\sqrt{\frac{\ln d}{k}}.
\]
\end{thm}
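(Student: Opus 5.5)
We reduce to a no-dimensional Helly theorem in a uniformly smooth (2-uniformly convex in the dual sense) space, namely $\ell_p^d$ with $p$ chosen close to $1$. For each $i\in[m]$, set
\[
K_i:=\braces{x\in R\ball{}_{\ell_1^d}\st \abs{\iprod{a_i}{x}-b_i}\le r}.
\]
These are convex sets, and by hypothesis every $k$ of them share a common point. We cannot work in $\ell_1^d$ directly, since it has no useful geometry for no-dimensional results. So we pass to $\ell_p^d$ with $p=1+\frac{1}{\ln d}$ (take $p$ of order $2$ if $d$ is small). The dual exponent is then $q=1+\ln d$, and the norm comparison reads
\[
\norm{x}_p\le\norm{x}_1\le d^{1-1/p}\norm{x}_p\le e\norm{x}_p .
\]
Hence $R\ball{}_{\ell_1^d}\subset R\ball{}_{\ell_p^d}\subset eR\ball{}_{\ell_1^d}$, and this inclusion is where the factor $e$ in the conclusion comes from.

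Next, we apply the no-dimensional Helly theorem in a $p$-uniformly convex space, in its version for $\ell_p$, $1<p\le2$, as recalled in the survey section. All $K_i$ lie in $R\ball{}_{\ell_p^d}$, and every $k$ of them intersect. The theorem then yields a point $x\in R\ball{}_{\ell_p^d}$, for instance a suitable average or centre point, whose $\ell_p$-distance to every $K_i$ is at most
\[
\rho\lesssim \frac{R}{\sqrt{(p-1)k}}\approx R\sqrt{\frac{\ln d}{k}} .
\]
The constant $1/\sqrt{p-1}$ here is the $2$-uniform convexity constant of $\ell_p$. I would expect to use the version in which the point $x$ is a convex combination of the points $x_J$. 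That gives $x\in R\ball{}_{\ell_1^d}$ directly, but $x\in eR\ball{}_{\ell_1^d}$ suffices in any case.

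Finally, we convert distance into residual. Fix $i$ and pick $y\in K_i$ with $\norm{x-y}_p\le\rho$. Then
\[
\abs{\iprod{a_i}{x}-b_i}\le r+\abs{\iprod{a_i}{x-y}}\le r+\norm{a_i}_q\,\rho .
\]
Since $a_i\in[-1,1]^d$, we have $\norm{a_i}_q\le d^{1/q}=d^{1/(1+\ln d)}\le e$. Collecting constants then gives
\[
\abs{\iprod{a_i}{x}-b_i}\le r+C R\sqrt{\frac{\ln d}{k}}, \qquad C\le 21 .
\]

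The main obstacle is the quantitative form of the no-dimensional Helly theorem in $\ell_p^d$. We need the dependence on the smoothness parameter to be exactly $(p-1)^{-1/2}$, which is Hanner/Ball–Carlen–Lieb $2$-uniform convexity of $\ell_p$, and we need the explicit constant to be small enough that, after multiplying by $e$ from Hölder and the numerical factors, it stays below $21$. Small $d$ (e.g.\ $d\le 2$, where $\ln d$ is tiny or zero) should be handled separately by choosing $p=2$, or checked to be trivial.
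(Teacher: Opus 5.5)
Your proposal is essentially the paper's proof: the same replacement of $\ell_1^d$ by $\ell_p^d$ with $p-1\asymp 1/\ln d$, the Helly bound of \Href{Lemma}{lem:rk_bound_lp}, and the same factor-$e$ Lipschitz step (your $\norm{a_i}_q\le d^{1/q}\le e$ is the paper's $\abs{\iprod{a_i}{u}}\le\norm{u}_1\le e\norm{u}_{p'}$), and the constant you flag as the main obstacle is already supplied by that lemma, which gives $\rho\le 4\sqrt2\,R\sqrt{\ln d/k}$ and hence $C=4\sqrt2\,e\approx 15.4\le 21$. Your exponent $p=1+1/\ln d$ is slightly better than the paper's $p'=\ln d/(\ln d-1)$, since it lies in $(1,2]$ for all $d\ge 3$ whereas $p'\le 2$ needs $d\ge 8$; for $d=2$ your fallback $p=2$ works, since $8R/\sqrt{k}\le 21R\sqrt{\ln 2/k}$, while $d=1$ is a defect of the statement itself (for $k=1$ the claimed bound $r$ is false, although for $k\ge 2$ it follows from one-dimensional Helly).
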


\medskip
Our second application of no-dimensional Helly-type results is motivated by feasibility and consistency questions in quantum information theory.
A prototypical instance is the \emph{consistency of local density matrices} (also known as the \emph{quantum marginal problem}), which asks whether a collection of local constraints can arise from a single global quantum state; this problem is known to be computationally intractable in general; it was first shown to be in QMA and QMA-complete under Turing reductions by Liu \cite{liu2006consistency}, and subsequently proven to be QMA-complete under Karp reductions by Broadbent and Grilo \cite{broadbent2022qma}.
From a geometric viewpoint, the natural ambient spaces are Schatten classes and their convexity properties, and we refer to \cite{aubrun2017alice} for background on the interface between asymptotic geometric analysis and quantum information.

Fix $d\ge 1$ and let $\mathbb{F}\in\{\R,\C\}$.
Write $\mathcal{M}_d:=\mathbb{F}^{d\times d}$ for the space of $d\times d$ matrices over $\mathbb{F}$.
We say that $A\in\mathcal{M}_d$ is \emph{self-adjoint} if $A=A^\ast$, where $A^\ast$ denotes the transpose when $\mathbb{F}=\R$
and the conjugate transpose when $\mathbb{F}=\C$.
If $\mathbb{F}=\C$, let
\[
\mathcal{H}_d:=\{A\in\C^{d\times d}:\ A=A^\ast\}
\]
denote the  vector space of Hermitian matrices.

For $1\le p\le\infty$ we write $\normsch[p]{A}$ for the Schatten $p$-norm (the $\ell_p$-norm of the singular value vector),
and we equip $\mathcal{M}_d$ with the Frobenius (Hilbert--Schmidt) inner product
\[
\iprod{A}{B}:=\mathrm{Tr}(A^\ast B).
\]

A matrix $\rho\in\mathcal{M}_d$ is called \emph{positive semidefinite}, written $\rho\succeq 0$, if it is self-adjoint and has
nonnegative spectrum. A \emph{density matrix} is a positive semidefinite matrix of unit trace:
\[
\rho\succeq 0,
\qquad
\mathrm{Tr}(\rho)=1.
\]

Given self-adjoint measurement operators $A_1,\dots,A_m\in\mathcal{M}_d$ and target values $b_1,\dots,b_m\in\R$,
we consider the feasibility problem of finding a density matrix whose linear measurements $\iprod{A_i}{\rho}$ are simultaneously
consistent with $b_i$ up to a tolerance level.
In this work we study a \emph{local-to-global} version: assuming that every subcollection of $k$ constraints is feasible up to error $t$,
we ask whether this implies the existence of a single density matrix that approximately satisfies all $m$ constraints.

\begin{thm}[Quantum feasibility from local consistency]\label{thm:quantum_psd_local_to_global}
Let $d\ge 3$ and $k\in[m]$. Let $A_1,\dots,A_m\in\mathcal{H}_d$ be Hermitian matrices satisfying
\[
\normsch[\infty]{A_i}\le 1 \qquad\text{for all } i\in[m],
\]
and let $b_1,\dots,b_m\in\R$. Fix $t\ge 0$.
Assume that for every subset $J\subset[m]$ with $\abs{J}=k$, there exists a density matrix
$\rho_J\succeq 0$ with $\mathrm{Tr}(\rho_J)=1$ such that
\[
\abs{\iprod{A_j}{\rho_J} - b_j}
\le t \qquad\text{for all } j\in J.
\]
Then there exists a density matrix $\rho\succeq 0$ with $\mathrm{Tr}(\rho)=1$ such that
\[
\abs{\iprod{A_i}{\rho} - b_i}
\ \le\
t + 21\,\sqrt{\frac{\ln d}{k}},
\]
for all $i\in[m]$.
\end{thm}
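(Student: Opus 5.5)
We prove Theorem~\ref{thm:quantum_psd_local_to_global} by transferring the problem from the trace-class geometry ($S_1$ for $\rho$, $S_\infty$ for the $A_i$) to the Schatten class $S_p$ with $p$ of order $\ln d$. There the no-dimensional Helly theorem for uniformly smooth spaces applies, and the price of changing norms is only a constant factor because $d^{1/p}$ is bounded.

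\textbf{Step 1: reformulation as intersection of convex sets.} Let $D:=\{\rho\succeq0:\ \mathrm{Tr}\rho=1\}$, a compact convex subset of $\mathcal H_d$. For each $i\in[m]$ and $s\ge0$ set
\[
K_i(s):=\{\rho\in D:\ \abs{\iprod{A_i}{\rho}-b_i}\le s\}.
\]
The hypothesis says that every $k$ of the sets $K_i(t)$ have a common point. We want a point in $\bigcap_i K_i(t+21\sqrt{\ln d/k})$.

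\textbf{Step 2: no-dimensional Helly in $S_p$.} Equip $\mathcal H_d$ with the norm $\normsch[p]{\cdot}$, $p\ge2$. This space is $2$-uniformly smooth with smoothness constant of order $\sqrt{p-1}$, which we take from the black-box facts in Section~\ref{sec:banach_space_likbez}. The no-dimensional Helly theorem of Adiprasito--B\'ar\'any--Mustafa--Terpai, in its Banach-space form surveyed in the paper, then gives a point $q\in\mathcal H_d$ such that
\[
\mathrm{dist}_{S_p}(q,K_i(t))\le C\sqrt{p}\,\frac{\diam_{S_p}D}{\sqrt k}\quad\text{for all } i.
\]
The intended variant should also place $q$ in $D$. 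This is natural because $q$ can be chosen as an average of points of the $k$-wise intersections, and those points lie in the convex set $D$. If the available version does not give $q\in D$, we instead apply Helly to the family $\{K_i(t)\}\cup\{D\}$, or restate the result relative to the convex set $D$.

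\textbf{Step 3: converting distances into residual bounds.} We have $\diam_{S_p}D\le \diam_{S_1}D\le 2$. Given $q$, pick $\rho_i\in K_i(t)$ with $\normsch[p]{q-\rho_i}\le \delta$, where $\delta:=C\sqrt{p}\cdot2/\sqrt k$. By Hölder's inequality,
\[
\abs{\iprod{A_i}{q}-b_i}\le t+\normsch[p']{A_i}\,\normsch[p]{q-\rho_i},
\]
and $\normsch[p']{A_i}\le d^{1/p'}\normsch[\infty]{A_i}$. This bound is too big. We should instead dualize the other way: put $A_i$ in $S_{p'}$ with only a small loss, namely measure $q-\rho_i$ in $S_1$ and $A_i$ in $S_\infty$. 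The point is that $\normsch[1]{X}\le d^{1-1/p}\normsch[p]{X}$ is also poor, so the correct transfer must be applied to the dual problem instead.

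We therefore use the dual, Carathéodory-flavoured form of Helly. By minimax (LP duality), infeasibility of the global problem at level $s$ yields weights $\lambda_i^\pm\ge0$ with $\sum\lambda=1$ such that the averaged operator $M=\sum_i(\lambda_i^+-\lambda_i^-)A_i$ and averaged target $\beta$ satisfy $\lambda_{\min}(M)-\beta>s$. We then apply the no-dimensional Carathéodory theorem in $S_p$ to the points $\pm A_i\in S_\infty$. These points have $S_p$ norm at most $d^{1/p}\le e$ when $p=\ln d$. The result approximates $M$ by an average of $k$ of them within $S_p$-error $C\sqrt{p}\,e/\sqrt k$, which converts to $S_\infty$-error at most the same amount since $\normsch[\infty]{\cdot}\le\normsch[p]{\cdot}$. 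This $k$-subset is then infeasible at level $s-C'\sqrt{\ln d/k}$, contradicting the hypothesis when $s=t+21\sqrt{\ln d/k}$. Choosing $p=\ln d$ (valid since $d\ge3$ gives $p\ge1$) and tracking constants should produce the $21$.

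\textbf{Main obstacle.} The delicate step is the duality reduction. It must handle the simplex constraint $D$ and the two-sided constraints uniformly: the min over $\rho\in D$ of $\iprod{M}{\rho}$ equals $\lambda_{\min}(M)$. We also need that the resampled $k$-term average still certifies $k$-wise infeasibility with error of at most $\normsch[\infty]{M-\tilde M}$ plus the matching error in the $\beta$'s. To control both simultaneously, we include the $b_i$ as an extra coordinate in the Carathéodory step.
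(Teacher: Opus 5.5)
\textbf{Verdict.} There is a genuine gap. You drop the Helly route for the wrong reason. The dual route you switch to does not, as written, give the stated bound.

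\textbf{The Helly route you discarded.} The Helly estimate in the paper, Lemma~\ref{lem:rk_bound_lp}, needs uniform convexity with quadratic modulus, i.e.\ $S_{p'}$ with $1<p'\le 2$. It is not a statement about the uniformly smooth $S_p$ with $p\ge 2$, so the $\sqrt{p/k}$ bound in your Step~2 is not among the available tools. The missing idea is to run Helly in $S_{p'}$ with $p'=\ln d/(\ln d-1)$:
\begin{itemize}
\item Every density matrix lies in the unit ball of $S_{p'}$, since $\normsch[p']{\rho}\le\normsch[1]{\rho}=1$.
\item Helly then gives a point at $S_{p'}$-distance at most $4\sqrt2\sqrt{(\ln d-1)/k}$ from every $K_i$.
\item The H\"older step pairs $\normsch[\infty]{A_i}\le 1$ with $\normsch[1]{\Delta}\le e\,\normsch[p']{\Delta}$, because $d^{1-1/p'}=d^{1/\ln d}=e$.
\end{itemize}
The total loss is the factor $e$, and $4\sqrt2\,e<21$. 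Your objection that $\normsch[1]{X}\le d^{1-1/p}\normsch[p]{X}$ is poor holds only for large $p$. For the exponent near $1$, this comparison is exactly what makes the direct route work.

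\textbf{The dual route.} The minimax-plus-Carath\'eodory argument is a genuinely different route. It is essentially the duality by which Helly in a uniformly convex space follows from Carath\'eodory in its smooth dual. It has one real advantage: the output $\rho$ lies in the set of density matrices automatically. That is the issue you flag in Step~2, and the paper's omitted argument does not spell it out. As written, however, it has two gaps and a small error.

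\emph{(i) The $b_i$ are not controlled.} The hypothesis only gives $\abs{b_i}\le 1+t$. Putting $b_i$ in an extra coordinate therefore makes the sampling error of order $(1+t)\sqrt{\ln d/k}$, so you get $t+C(1+t)\sqrt{\ln d/k}$ rather than $t+C\sqrt{\ln d/k}$. This can be repaired:
\begin{itemize}
\item Replace $[b_i-t,b_i+t]$ by $[\ell_i,u_i]:=[b_i-t,b_i+t]\cap[-1,1]$. This is legitimate because $\iprod{A_i}{\rho}\in[-1,1]$ for every density matrix.
\item The $k$-wise hypothesis then holds with tolerance $0$, and it suffices to land in $[\ell_i-\eta,u_i+\eta]$.
\item Using $\mathrm{Tr}\,\rho=1$, the points to sample become $A_i-u_i\id$ and $\ell_i\id-A_i$, each of operator norm at most $2$.
\end{itemize}

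\emph{(ii) The constant $21$ is not reached.} To approximate $M\neq 0$ you must recentre at $M$. The sampled points have $S_p$-norm up to $2e$, so Lemma~\ref{lem:Caratheodory_sequence_l_p} only bounds the error by about $4e\cdot 21\sqrt{(\ln d-1)/k}$. That is roughly ten times the claimed $21\sqrt{\ln d/k}$, so your remark that tracking constants ``should produce the $21$'' is not supported by this route.

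\emph{(iii) Small $d$.} Lemma~\ref{lem:Caratheodory_sequence_l_p} requires $p\ge 2$. So $p=\ln d$ is not admissible for $d\le 7$, contrary to your ``valid since $p\ge1$''; there one must take, e.g., $p=2$.
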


\section{Definitions and results from Banach space theory}\label{sec:banach_space_likbez}

Throughout, $X$ is a Banach space with norm $\norm{\cdot}$.
The modulus of convexity $\mcox{\cdot}\st [0,2]\to\R$ is defined by
\[
\mcox{\varepsilon}
:=\inf\braces{\,1-\norm{\frac{x+y}{2}}:\ \norm{x}=\norm{y}=1,\ \norm{x-y}\ge \varepsilon\,}.
\]
A Banach space is called \emph{uniformly convex} if $\mcox{\varepsilon}>0$ for every $\varepsilon\in(0,2]$.

The \emph{modulus of smoothness} $\mglx{\cdot}\st [0,\infty)\to\R$ is defined by
\[
\mglx{t}
:=\sup\braces{\,\frac{\norm{x+ty}+\norm{x-ty}}{2}-1:\ \norm{x}=\norm{y}=1\,}.
\]
A Banach space $X$ is \emph{uniformly smooth} if $\frac{\mglx{\tau}}{\tau}\to 0$ as $\tau\to 0$.

A comprehensive discussion of these moduli can be found in~\cite{DiestelEng, lindenstrauss2013classical}.
In what follows, we will only use quadratic estimates for the moduli in certain $L_p$ spaces.

\begin{prp}\label{prp:modulus_conv_smooth_l_p_second_order}
\begin{enumerate}
\item[\textup{(i)}] If $1<p\le 2$, then for every $\varepsilon\in[0,2]$,
\[
\mco{\ell_p}{\varepsilon}\ \ge\ \frac{p-1}{8}\,\varepsilon^2.
\]
\item[\textup{(ii)}] If $2\le p<\infty$, then for every $t\ge 0$,
\[
\mgl{\ell_p}{t}\ \le\ \frac{p-1}{2}\,t^2.
\]
\end{enumerate}
\end{prp}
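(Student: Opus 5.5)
Both estimates follow from the classical two-point inequalities for $\ell_p$ norms, with constants extracted by a convexity argument. We prove (ii) directly and deduce (i) by duality, or prove (i) from a dual two-point inequality.

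\textbf{(ii).} For $2\le p<\infty$ we use the sharp uniform smoothness (Ball--Carlen--Lieb) inequality
\[
\frac{\norm{x+ty}_p^2+\norm{x-ty}_p^2}{2}\ \le\ \norm{x}_p^2+(p-1)\,t^2\norm{y}_p^2 .
\]
It is first proved for scalars, i.e.\ as a two-point inequality on $\R$. It then lifts to $\ell_p$ via Minkowski's inequality in $\ell_{p/2}$, together with the scalar inequality
\[
\parenth{\frac{|a+b|^p+|a-b|^p}{2}}^{2/p}\le a^2+(p-1)b^2,
\]
a Bonami/Gross hypercontractivity-type estimate. With $\norm{x}_p=\norm{y}_p=1$, concavity of the square root gives
\[
\frac{\norm{x+ty}_p+\norm{x-ty}_p}{2}\le \sqrt{\frac{\norm{x+ty}_p^2+\norm{x-ty}_p^2}{2}}\le\sqrt{1+(p-1)t^2}\le 1+\frac{p-1}{2}t^2 ,
\]
which is exactly the claimed bound on $\mgl{\ell_p}{t}$.

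\textbf{(i).} For $1<p\le 2$ the dual inequality (Ball--Carlen--Lieb, $2$-uniform convexity) reads
\[
\frac{\norm{x+y}_p^2+\norm{x-y}_p^2}{2}\ \ge\ \norm{x}_p^2+(p-1)\norm{y}_p^2 .
\]
It follows from the smoothness inequality for $q=p'$ by the standard duality between $\mco{X}{\cdot}$ and $\mgl{X^*}{\cdot}$ (Lindenstrauss), or directly by Hanner-type arguments. Set $u=\frac{x+y}{2}$ and $v=\frac{x-y}{2}$ for unit $x,y$ with $\norm{x-y}_p\ge\varepsilon$. Applying the inequality to $u,v$ gives
\[
1=\frac{\norm{u+v}_p^2+\norm{u-v}_p^2}{2}\ \ge\ \norm{u}_p^2+(p-1)\frac{\varepsilon^2}{4},
\]
so
\[
\norm{u}_p\le\sqrt{1-(p-1)\varepsilon^2/4}\le 1-(p-1)\varepsilon^2/8 .
\]
This yields $\mco{\ell_p}{\varepsilon}\ge\frac{p-1}{8}\varepsilon^2$.

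\textbf{Main obstacle.} The substantive step is the two-point scalar inequality with the sharp constant $p-1$ and its lift to vectors. Since the paper treats these facts as a black box, we cite Ball--Carlen--Lieb (\emph{Sharp uniform convexity and smoothness inequalities for trace norms}) for both two-point inequalities. The remaining elementary reductions are those carried out above.
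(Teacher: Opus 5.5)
Your argument is correct, but it runs in the opposite direction from the paper's. The paper takes (i) from Hanner's computation of $\delta_{\ell_p}$, $1<p\le 2$. It then gets (ii) for the conjugate exponent $q=p'$ from Lindenstrauss's formula $\rho_{X^*}(\tau)=\sup_{0\le\varepsilon\le 2}\{\tau\varepsilon/2-\delta_X(\varepsilon)\}$: inserting $\delta_{\ell_p}(\varepsilon)\ge\frac{p-1}{8}\varepsilon^2$ gives $\rho_{\ell_q}(\tau)\le\frac{\tau^2}{2(p-1)}=\frac{q-1}{2}\tau^2$. You instead take the sharp two-point inequalities for squared norms (Ball--Carlen--Lieb; for $\ell_p$ these reduce to the two-point Bonami inequality, as you sketch) as the primitive, and read off both moduli by elementary manipulations. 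What this buys: the quadratic bounds hold for all $\varepsilon\in[0,2]$ and $t\ge 0$ at once, whereas Hanner's formula describes $\delta_{\ell_p}$ only implicitly. The same argument also works verbatim for $S_p$, which is the source the paper itself cites for Proposition~\ref{prp:modulus_conv_smooth_schatten_second_order}. What it costs: you cite two inequalities where one suffices. Running Lindenstrauss's formula the other way gives $\delta_{\ell_p}(\varepsilon)\ge\sup_{\tau\ge 0}\{\tau\varepsilon/2-\frac{q-1}{2}\tau^2\}=\frac{\varepsilon^2}{8(q-1)}=\frac{p-1}{8}\varepsilon^2$, so your (ii) alone yields (i) with exactly the stated constant. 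That is the precise content of your remark about Lindenstrauss duality: his formula relates the moduli, not the two-point inequalities themselves, whose mutual duality is a separate lemma in Ball--Carlen--Lieb. One small omission in your sketch of the lift: before applying the scalar inequality coordinatewise and Minkowski in $\ell_{p/2}$, you need the power-mean step $\frac{\|x+y\|_p^2+\|x-y\|_p^2}{2}\le\bigl(\frac{\|x+y\|_p^p+\|x-y\|_p^p}{2}\bigr)^{2/p}$, which holds because $p\ge 2$.
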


The lower bound on the modulus of convexity is due to~\cite{hanner1956uniform}.
The corresponding estimate for the modulus of smoothness follows from the Lindenstrauss polar identity relating the moduli of convexity and smoothness~\cite{lindenstrauss1963modulus}.

\medskip

Fix $d\ge 1$ and let $\mathcal{M}_d$ be the space of real (or complex) $d\times d$ matrices.
For $1\le p\le\infty$ we denote by $\normsch[p]{A}$ the \emph{Schatten $p$-norm}, i.e.\ the $\ell_p$-norm of the singular value vector.
Abusing notation, we write $S_p$ for the Banach space $\big(\mathcal{M}_d,\normsch[p]{\cdot}\big)$.

\begin{prp}\label{prp:modulus_conv_smooth_schatten_second_order}
\begin{enumerate}
\item[\textup{(i)}] If $1<p\le 2$, then for every $\varepsilon\in[0,2]$,
\[
\mco{S_p}{\varepsilon}\ \ge\ \frac{p-1}{8}\,\varepsilon^2.
\]
\item[\textup{(ii)}] If $2\le p<\infty$, then for every $t\ge 0$,
\[
\mgl{S_p}{t}\ \le\ \frac{p-1}{2}\,t^2.
\]
\end{enumerate}
\end{prp}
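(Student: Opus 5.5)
The Schatten-class estimates can be obtained from the optimal 2-uniform convexity and 2-uniform smoothness inequalities of Ball, Carlen and Lieb, which are the noncommutative analogues of Hanner-type inequalities. I will follow the same route the paper indicates for Proposition~\ref{prp:modulus_conv_smooth_l_p_second_order}: first prove (i) directly, then obtain (ii) by duality.

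For (i), I invoke the Ball--Carlen--Lieb inequality: for $1<p\le 2$ and all $A,B\in\mathcal{M}_d$,
\[
\parenth{\frac{\normsch[p]{A+B}^p+\normsch[p]{A-B}^p}{2}}^{2/p}\ \ge\ \normsch[p]{A}^2+(p-1)\normsch[p]{B}^2 .
\]
The steps are as follows.

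\begin{enumerate}
\item Take $\normsch[p]{x}=\normsch[p]{y}=1$ with $\normsch[p]{x-y}\ge\varepsilon$.
\item Put $A=\frac{x+y}{2}$ and $B=\frac{x-y}{2}$, so that $A+B=x$ and $A-B=y$. The left-hand side then equals $1$.
\item This gives $\normsch[p]{\frac{x+y}{2}}^2\le 1-(p-1)\frac{\varepsilon^2}{4}$.
\item Using $\sqrt{1-s}\le 1-s/2$, we get $1-\normsch[p]{\frac{x+y}{2}}\ge \frac{p-1}{8}\varepsilon^2$.
\end{enumerate}

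For the range of the parameters: since $\varepsilon\le 2$ and $p-1\le1$, the quantity under the root stays nonnegative. This is exactly the constant claimed in (i).

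For (ii), let $2\le p<\infty$ and $q=p/(p-1)\in(1,2]$. Trace duality identifies $S_p$ with the dual of $S_q$ isometrically: $(S_q)^\ast\cong S_p$ under $\iprod{A}{B}=\mathrm{Tr}(A^\ast B)$. Lindenstrauss's duality formula then gives
\[
\mgl{S_p}{t}=\sup_{0\le\varepsilon\le 2}\parenth{\frac{t\varepsilon}{2}-\mco{S_q}{\varepsilon}}.
\]
Inserting the bound $\mco{S_q}{\varepsilon}\ge\frac{q-1}{8}\varepsilon^2$ from (i) and maximizing the quadratic in $\varepsilon$ gives
\[
\mgl{S_p}{t}\le \frac{t^2}{2(q-1)}=\frac{p-1}{2}\,t^2 .
\]
Here I used $1/(q-1)=p-1$. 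Dropping the constraint $\varepsilon\le2$ only enlarges the supremum, so this bound is legitimate.

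The main obstacle is the noncommutative inequality itself. Hanner's inequality fails in $S_p$ for general $p$, so the commutative proof does not transfer, and the sharp constant $p-1$ requires the Ball--Carlen--Lieb convexity argument: convexity of $A\mapsto\mathrm{Tr}|A|^p$ combined with a second-derivative estimate along the segment. I will treat that inequality as a black box, consistent with the paper's stated policy on Banach space facts, and cite it. The two remaining points are routine: checking that the duality formula applies in the finite-dimensional (hence reflexive) space $S_q$, and the elementary optimization above.
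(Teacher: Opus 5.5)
Your proposal is correct and follows the paper's route. Part (i) comes from the Ball--Carlen--Lieb sharp uniform convexity inequality (their Theorem~1) with $A=\frac{x+y}{2}$, $B=\frac{x-y}{2}$, and part (ii) from Lindenstrauss's duality formula applied to $S_p\cong (S_q)^\ast$; the elementary steps you supply, namely $\sqrt{1-s}\le 1-s/2$ and the quadratic optimization giving $\frac{t^2}{2(q-1)}=\frac{p-1}{2}t^2$, are what the paper's citations leave implicit.
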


The lower bound on the modulus of convexity follows from~\cite[Theorem~1]{ball1994sharp}.
The corresponding estimate for the modulus of smoothness again follows from the Lindenstrauss polar identity~\cite{lindenstrauss1963modulus}.
\section{No-dimensional Carath\'eodory theorem}\label{sec:nodim_caratheodory}

Since no-dimensional versions of Carath\'eodory's theorem have been studied extensively,
we introduce the notion needed for this paper.

\begin{dfn}\label{dfn:caratheodory_sequence}
We call a sequence $\braces{R_k(X)}_{k \in \N}$ a \emph{Carath\'eodory sequence} of a Banach space $(X,\norm{\cdot})$
if it has the following property:

\smallskip
\noindent
Whenever the convex hull of a subset $Q$ of the unit ball $\ball{}_X$ contains the origin,
there exist points $x_1,\dots,x_k\in Q$ such that
\[
\norm{\frac{x_1+\cdots+x_k}{k}}
\le R_k(X).
\]
\end{dfn}

\begin{dfn}\label{dfn:caratheodory_approx_property}
We say that a Banach space $(X,\norm{\cdot})$ has the \emph{Carath\'eodory approximation property}
if there exists a Carath\'eodory sequence $\braces{R_k(X)}_{k\ge 1}$ with $R_k(X)\to 0$ as $k\to\infty$.
\end{dfn}

The Carath\'eodory approximation property gives a quantitative rate of approximation by convex combinations of bounded vectors.

The estimate $R_k(\hilbert)=\frac{1}{\sqrt{k}}$ for a Hilbert space $\hilbert$ is folklore;
it appears, for instance, as an exercise in standard texts on the probabilistic method and discrete mathematics;
see also~\cite[Theorem~1.1]{adiprasito2020theorems}.
This phenomenon is often presented as a \emph{convexification effect of Minkowski summation} and is essentially the key step
in the celebrated Shapley--Folkman--Starr theorem~\cite{starr1969quasi} (see the survey~\cite{fradelizi2018convexification}).

In Banach space theory, the no-dimensional Carath\'eodory theorem is usually referred to as \emph{Maurey's lemma}~\cite{pisier1980remarques}.
It implies, in particular, that Banach spaces of non-trivial type have the Carath\'eodory approximation property (We define neither type nor cotype  of a Banach space, since we will not use the corresponding results).
The standard proof is probabilistic and follows directly from the definition of type.
Recently, spaces with the Carath\'eodory approximation property were characterized in~\cite[Theorem~3.1]{artstein2025b}
using the language of the ``convexification effect'';
a corresponding counterexample was obtained in~\cite[Theorem~3.10]{kadets2022connection}.

\begin{prp}\label{prp:no-dim_Caratheodory}
The following assertions are equivalent for a Banach space $X$:
\begin{enumerate}
\item $X$ has the Carath\'eodory approximation property;
\item $X$ is of non-trivial type.
\end{enumerate}
\end{prp}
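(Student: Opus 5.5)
We prove the two implications separately. Throughout we use the standard characterization of non-trivial type: by the Maurey--Pisier theorem, $X$ has non-trivial type if and only if $X$ does not contain the spaces $\ell_1^n$ uniformly, i.e.\ $\ell_1^n$ is not $(1+\varepsilon)$-finitely representable in $X$ for some (equivalently all small) $\varepsilon>0$ and $n$ large.

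\emph{(2)$\Rightarrow$(1).} This is Maurey's lemma. Assume $X$ has type $p>1$ with constant $T$. Since $0\in\operatorname{conv} Q$ and the claim is stable under small perturbations, we may take finitely many points $y_1,\dots,y_N\in Q$ and weights $\lambda_i$ with $\norm{\sum\lambda_i y_i}\le\eta$ for any given $\eta>0$. Let $\xi_1,\dots,\xi_k$ be i.i.d.\ random vectors taking value $y_i$ with probability $\lambda_i$, and put $z=\sum_i\lambda_i y_i$. By symmetrization ($\mathbb E\norm{\sum(\xi_j-z)}\le 2\,\mathbb E\norm{\sum\varepsilon_j\xi_j}$) and type $p$,
\[
\mathbb E\norm{\frac1k\sum_{j=1}^k\xi_j - z}\le \frac{2T}{k}\Bigl(\sum_j\norm{\xi_j}^p\Bigr)^{1/p}\le 2T k^{1/p-1}.
\]
Hence some realization satisfies $\norm{\frac1k\sum x_j}\le 2Tk^{1/p-1}+\eta$. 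Letting $\eta\to0$, or simply absorbing $\eta$ into the constant, gives $R_k(X)=C k^{1/p-1}\to0$. The points $x_j$ may repeat, which the definition allows.

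\emph{(1)$\Rightarrow$(2).} Argue by contraposition. Suppose $X$ has trivial type. Then for every $n$ and $\varepsilon>0$ there is a $(1+\varepsilon)$-isomorphic copy of $\ell_1^{n}$ in $X$. Fix $k$ and take $n=2k$ (or larger). In $\ell_1^{n}$, consider the set $Q=\{\pm e_i\}$. Its convex hull contains $0$, yet any $k$ points $x_1,\dots,x_k\in Q$ satisfy
\[
\norm{\frac1k\sum x_j}_1\ge \frac{\#\{\text{coordinates used an odd number of times}\}}{k}.
\]
This is too weak, since $x$ and $-x$ cancel. So instead we must choose $Q$ more cleverly.

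Take $Q=\{e_1,\dots,e_n\}\cup\{-\frac1n\sum_i e_i\}$. The origin lies in its convex hull, but an average of $k$ points has $\ell_1$-norm of order $1$ when $n\gg k$. Indeed, if $a$ of the chosen points are $-u$ (with $u=\frac1n\sum e_i$) and the rest are basis vectors, then coordinates hit by no basis vector contribute $a/(kn)$ each, and coordinates hit contribute at least their positive part. Computing this gives norm $\ge \frac{k-a}{k}\bigl(1-\frac{a}{n}\bigr)+\frac{a}{k}\cdot\frac{n-k}{n}\ge 1-\frac{2k}{n}$.

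Transferring $Q$ through the $(1+\varepsilon)$-embedding and rescaling into $\ball{}_X$ yields $R_k(X)\ge (1-2k/n)/(1+\varepsilon)^2$. Hence $R_k(X)$ cannot tend to $0$. This contradicts (1).

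The main obstacle is the (1)$\Rightarrow$(2) direction. It relies on the deep Maurey--Pisier characterization, which we invoke as a black box, and the construction of $Q$ needs care so that cancellations do not destroy the lower bound.
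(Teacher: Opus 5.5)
Your proof is correct, but it takes a different route from the paper on (1)$\Rightarrow$(2), because the paper gives no argument there.

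For (2)$\Rightarrow$(1) you give the Maurey-lemma argument the paper alludes to: symmetrization plus the type-$p$ inequality, yielding $R_k(X)=2Tk^{1/p-1}$. Two cosmetic points there. First, $0\in\operatorname{conv}Q$ already means $0$ is an exact finite convex combination of points of $Q$, so $\eta$ is superfluous. Second, the expectation belongs outside $\bigl(\sum_j\norm{\xi_j}^p\bigr)^{1/p}$.

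For (1)$\Rightarrow$(2) the paper only cites the characterization \cite[Theorem~3.1]{artstein2025b}, phrased via the convexification effect, and the counterexample of \cite[Theorem~3.10]{kadets2022connection}. You instead reduce the converse to the Pisier (Maurey--Pisier) characterization of trivial type by uniform containment of $\ell_1^n$, plus an explicit configuration. Take $Q=\{e_1,\dots,e_n,-u\}$ with $u=\frac1n\sum_i e_i$ and $n\ge k$. If $a$ of the chosen $x_j$ equal $-u$ and the others hit $h\le k-a$ distinct coordinates, then exactly
\[
\norm{x_1+\dots+x_k}_1=k-\frac{2ha}{n}\ge k-\frac{k^2}{2n}.
\]

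This gives a short, transparent proof and a sharper conclusion than the bare equivalence. An embedding with any fixed distortion $\lambda$ already forces $R_k(X)\ge 1/\lambda$ for every $k$, so James's $(1+\varepsilon)$-refinement is not needed for the equivalence. With the $(1+\varepsilon)$-embeddings you quote, every Carath\'eodory sequence of a space of trivial type satisfies $R_k(X)\ge 1$. So the trivial sequence $R_k\equiv 1$ is optimal, which is a sharp dichotomy.

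The price is that all the depth sits in Pisier's theorem, used as a black box; that is consistent with how the paper treats Banach-space facts. In a final write-up, drop the false start with $Q=\{\pm e_i\}$.
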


Recently, the author~\cite{ivanov2021approximate} provided a greedy-algorithm proof of the no-dimensional Carath\'eodory theorem
in uniformly smooth Banach spaces, thereby derandomizing Maurey's lemma
(interestingly, the underlying idea was first suggested for a ``convexification effect'' result in~\cite{ivanov2012generalization}).
It was shown that
\[
R_k(X) \ =\ \frac{4 e^2}{k\,\rho_X^{-1}\parenth{1/k}},
\]
where $\rho_X^{-1}$ denotes the inverse function of the modulus of smoothness of $X$.

Combining this estimate with \Href{Proposition}{prp:modulus_conv_smooth_l_p_second_order}
and \Href{Proposition}{prp:modulus_conv_smooth_schatten_second_order}, we obtain the following.

\begin{lem}\label{lem:Caratheodory_sequence_l_p}
Let $2\le p<+\infty$.
Then the sequence $\braces{21\sqrt{\frac{p-1}{k}}}_{k\ge 1}$ is a Carath\'eodory sequence for $\ell_p$ and for $S_p$.
Moreover, whenever the convex hull of a subset $Q$ of the unit ball $\ball{}_X$ contains the origin,
there exists a greedy algorithm that selects points $x_1,\dots,x_k\in Q$ successively so that
\[
\norm{\frac{x_1+\cdots+x_k}{k}}
\le R_k(X)
\]
at each step.
\end{lem}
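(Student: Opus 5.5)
The plan is to plug the quadratic smoothness bound of \Href{Proposition}{prp:modulus_conv_smooth_l_p_second_order}(ii), respectively \Href{Proposition}{prp:modulus_conv_smooth_schatten_second_order}(ii), into the greedy estimate
\[
R_k(X)=\frac{4e^2}{k\,\rho_X^{-1}(1/k)}
\]
of~\cite{ivanov2021approximate}, quoted just before the lemma. That estimate already includes the statement that the greedy algorithm keeps the running average below the bound at every step, so the ``moreover'' part is inherited directly. The only work is to compute the inverse modulus and track the constant.

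For $X=\ell_p$ or $S_p$ with $2\le p<\infty$ we have $\rho_X(t)\le \frac{p-1}{2}t^2$ for all $t\ge 0$. Since $\rho_X$ is nondecreasing and continuous, with $\rho_X(0)=0$ and $\rho_X(t)\to\infty$, this gives $\rho_X^{-1}(s)\ge \sqrt{2s/(p-1)}$. Indeed, at $t_0=\sqrt{2s/(p-1)}$ we have $\rho_X(t_0)\le s$, so the (generalized) inverse at $s$ is at least $t_0$. I would briefly fix the convention: take $\rho_X^{-1}(s)=\sup\{t:\rho_X(t)\le s\}$, which is what the greedy argument uses, since it chooses a step size $t$ with $\rho_X(t)\le 1/k$. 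Substituting $s=1/k$ gives
\[
R_k(X)\le \frac{4e^2}{k\sqrt{2/((p-1)k)}}=\frac{4e^2}{\sqrt2}\sqrt{\frac{p-1}{k}}=2\sqrt2\,e^2\sqrt{\frac{p-1}{k}}.
\]
Numerically, $2\sqrt2\,e^2\approx 20.9<21$, which gives the claimed sequence. Replacing a Carath\'eodory sequence by a larger one preserves the defining property, so $\{21\sqrt{(p-1)/k}\}_k$ is a Carath\'eodory sequence, and the greedy guarantee holds at each step with this bound.

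The main obstacle is not computational. It is making sure the cited formula applies verbatim: that the $\rho^{-1}$ in~\cite{ivanov2021approximate} is monotone in the right direction (upper bounds on $\rho$ yield lower bounds on $\rho^{-1}$), and that the result is stated for real Banach spaces. For complex $S_p$, one views $\mathcal M_d$ as a real normed space; the norm and hence the modulus of smoothness are unchanged, so \Href{Proposition}{prp:modulus_conv_smooth_schatten_second_order} still applies. Finally, I would note that infinite-dimensional $\ell_p$ poses no issue, since the greedy argument only uses uniform smoothness.
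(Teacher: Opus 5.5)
Your proposal is correct and is the same argument the paper relies on: the paper simply says the lemma follows by combining the greedy estimate $R_k(X)=\frac{4e^2}{k\,\rho_X^{-1}(1/k)}$ with the bound $\rho_X(t)\le\frac{p-1}{2}t^2$. Your computation $R_k(X)\le 2\sqrt2\,e^2\sqrt{(p-1)/k}\approx 20.9\sqrt{(p-1)/k}<21\sqrt{(p-1)/k}$ supplies the arithmetic the paper leaves implicit, and the step-by-step greedy guarantee is inherited from the cited result in both accounts.
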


\begin{rem}
The authors of~\cite{adiprasito2020theorems} derived numerous consequences of the no-dimensional Carath\'eodory theorem
in the Euclidean case, including no-dimensional versions of Tverberg's theorem and weak $\varepsilon$-net theorems.
These results were later generalized to Banach spaces with non-trivial type in~\cite{ivanov2021no}.
The reader is invited to check whether the construction from~\cite[Theorem~3.10]{kadets2022connection}
shows that the corresponding statements fail for Banach spaces of trivial type.
No-dimensional colorful Tverberg-type and related problems have attracted significant attention recently
(see the survey~\cite{polyanskii2025no}).
Quite unexpectedly, an optimal version with a deterministic proof in the Euclidean case was obtained recently in~\cite{barabanshchikova2025tightcolorfulnodimensionaltverberg}.
Nevertheless, precise quantitative answers to the following problems are still missing.
\end{rem}

\begin{Problem}
Given a Banach space $X$ and integers $k\ge 2$ and $n\ge 2$, find the smallest value $\operatorname{Tv}(X,k,n)$
such that for any $n$-point sets $Q_1,\dots,Q_{n}\subset X$, there exists a pairwise disjoint system of transversals
$\{P_1,\dots,P_{k}\}$ of $Q_1,\dots,Q_n$ and a ball of radius
\[
\operatorname{Tv}(X,k,n)\cdot \max_{1\le i\le n} \diam Q_i
\]
that intersects the convex hull of each $P_i$, for $i\in[k]$.
\end{Problem}

\begin{conj}
For any fixed integer $k\ge 2,$
\[
\operatorname{Tv}(X,k,n)  \to 0
\]
as $n \to \infty$ if and only if $X$ is of non-trivial type.
\end{conj}

\begin{Problem}
Provide a greedy-algorithm proof of the inequality
\[
\operatorname{Tv}(X,k,n) \le \frac{C}{n\,\rho_X^{-1}\parenth{1/n}}
\]
for some universal constant $C$ and any uniformly smooth Banach space $X.$
\end{Problem}

\section{No-dimensional Helly theorem}\label{sec:nodim_helly}

In this section we formalize the ``approximate'' (no-dimensional) Helly-type property.
The guiding principle is as follows: if every small subfamily of convex sets has a common point in a bounded region, then certain neighborhoods of the sets in the whole family have a common point. Recall that by the $R$-neighborhood of a set $A$ we understand the set
\[
\braces{x  \in X \st \dist{x}{A} \le R}.
\]

\begin{dfn}\label{dfn:helly_sequence}
Let $X$ be a Banach space.
We call a sequence $\braces{r_k(X)}_{k \in \N}$ a \emph{Helly approximation sequence} of $X$ if it has the following property:

\smallskip
\noindent
For every finite family $\mathcal{F}$ of convex subsets of $X$, if every subfamily of $\mathcal{F}$ of size $k$ has a common point inside the unit ball,
then the $r_k(X)$-neighborhoods of all sets in $\mathcal{F}$ have a common point.
\end{dfn}

\begin{dfn}\label{dfn:helly_approx_property}
We say that a Banach space $(X,\norm{\cdot})$ has the \emph{Helly approximation property} if there exists a Helly approximation sequence $\braces{r_k(X)}_{k \in \N}$ such that
\[
r_k(X)\to 0 \qquad\text{as } k\to\infty.
\]
\end{dfn}

It was shown in \cite[Theorem~1.2]{adiprasito2020theorems} that $r_k(\hilbert)=\frac{1}{\sqrt{k}}$ for Hilbert spaces.
In \cite{Ivanov2025} the author extended this to uniformly convex Banach spaces, obtaining the bound
\[
r_k(X) \leq \max\left\{1,\, 2\left(\frac{2}{q C_X}\right)^{1/q}\right\}
\]
under the assumption that $\delta_X(\varepsilon) \geq C_X \varepsilon^q$ for some $C_X>0$ and $q\ge 2$.

Combining this estimate with \Href{Proposition}{prp:modulus_conv_smooth_l_p_second_order}
and \Href{Proposition}{prp:modulus_conv_smooth_schatten_second_order}, one obtains the following.

\begin{lem}[Explicit bound for $\ell_p$, $1<p\le 2$]\label{lem:rk_bound_lp}
Let $1<p\le 2$.
Then the sequence $\braces{\frac{4\sqrt{2}}{\sqrt{p-1}}\cdot \frac{1}{\sqrt{k}}}_{k\in \N}$ is a Helly approximation sequence for $\ell_p$ and for $S_p$.
\end{lem}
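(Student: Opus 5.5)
This is a direct substitution into the bound from \cite{Ivanov2025} quoted above. That result says: if $\delta_X(\varepsilon)\ge C_X\varepsilon^q$ for some $C_X>0$ and $q\ge 2$, then $X$ has a Helly approximation sequence of the form
\[
r_k(X)\le \max\Bigl\{1,\,2\bigl(\tfrac{2}{qC_X}\bigr)^{1/q}\Bigr\}\cdot k^{-1/q}.
\]
The displayed bound in the text suppresses the factor $k^{-1/q}$, which is what makes $r_k\to 0$. I would first restate this result with the $k$-dependence made explicit, since otherwise the lemma cannot follow.

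Next I take $X=\ell_p$ or $X=S_p$ with $1<p\le 2$. Part (i) of \Href{Proposition}{prp:modulus_conv_smooth_l_p_second_order} and of \Href{Proposition}{prp:modulus_conv_smooth_schatten_second_order} gives
\[
\delta_X(\varepsilon)\ge \tfrac{p-1}{8}\,\varepsilon^2 .
\]
So the hypothesis holds with $q=2$ and $C_X=\frac{p-1}{8}$. This is the same for both spaces, so the two cases are handled identically.

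Substituting gives
\[
2\Bigl(\tfrac{2}{2\cdot (p-1)/8}\Bigr)^{1/2}=2\sqrt{\tfrac{8}{p-1}}=\tfrac{4\sqrt2}{\sqrt{p-1}} .
\]
Since $p-1\le 1$, this quantity is at least $4\sqrt2>1$, so the maximum with $1$ is attained by the second term. Multiplying by $k^{-1/2}$ yields exactly $\frac{4\sqrt2}{\sqrt{p-1}}\cdot\frac1{\sqrt k}$, as claimed.

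The main obstacle is bookkeeping about the precise form of the cited theorem. I need to confirm three things: that the $k^{-1/q}$ factor is really present, that the hypothesis is stated with the same normalization of $\delta_X$ as in the Hanner/Ball--Carlen--Lieb bounds, and that the result applies to finite-dimensional $S_p$ as well as to $\ell_p$. Both are uniformly convex, so the last point should be fine.
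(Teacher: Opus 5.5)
Your proposal is correct and is exactly the paper's argument. You plug $q=2$ and $C_X=\frac{p-1}{8}$ (from Hanner, resp.\ Ball--Carlen--Lieb) into the bound of \cite{Ivanov2025}, which gives $2\bigl(\frac{8}{p-1}\bigr)^{1/2}=\frac{4\sqrt2}{\sqrt{p-1}}\ge 1$. You are also right that the quoted display drops the factor $k^{-1/q}$; the lemma's $\frac{1}{\sqrt{k}}$, and its later use in the proof of \Href{Theorem}{thm:chebyshev_ball}, show that this factor is intended.
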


Much less is currently understood about no-dimensional Helly-type results, and we would like to pose several problems.

\begin{Problem}
Characterize Banach spaces with the Helly approximation property.
\end{Problem}

It would be interesting to understand whether Banach spaces with the Helly approximation property are precisely those of non-trivial cotype.
In particular, this would imply that the $\ln d$ factor can be replaced by a constant in \Href{Theorem}{thm:chebyshev_ball}
and \Href{Theorem}{thm:quantum_psd_local_to_global}, since $\ell_1$ has a non-trivial cotype.

\section{A trick for bringing dimension back}\label{sec:trick_dimension_back}

We have listed several no-dimensional results. All of them require geometric properties of the ambient Banach space (uniform convexity, uniform smoothness, non-trivial type, etc.).
The difficulty is that the extremal spaces $\ell_1$ and $\ell_\infty$ do not possess any of these properties.
However, once we \emph{fix the dimension}, we can exploit dimension-dependent norm comparisons to ``transfer'' a problem from $\ell_1^d$ or $\ell_\infty^d$ to a nearby $\ell_p^d$ space that does enjoy the required geometry, at the cost of a mild multiplicative factor.
The same idea applies verbatim to Schatten classes.

\subsection{Dimension-dependent norm comparisons}

Let $x=(x_1,\dots,x_d)\in\R^d$ and $1\le p<\infty$. Recall the standard inequalities
\[
\norm{x}_{\infty}\le \norm{x}_p \le d^{1/p}\norm{x}_{\infty},
\qquad
\norm{x}_p\le \norm{x}_1 \le d^{1-1/p}\norm{x}_p.
\]
Set
\[
p:=\ln d,
\qquad
p':=\frac{p}{p-1}=\frac{\ln d}{\ln d-1}.
\]

\begin{prp}\label{prp:l_p_trick_bounds}
Let $d\ge 3$ and define
\[
p:=\ln d,
\qquad
p':=\frac{\ln d}{\ln d-1}.
\]
Then the following statements hold:

\smallskip
\noindent\textup{(i) Vector case.}
For every $x\in\R^d$,
\[
\norm{x}_{\infty}\le \norm{x}_p \le e\,\norm{x}_{\infty},
\qquad
\norm{x}_{p'} \le \norm{x}_1 \le e\,\norm{x}_{p'}.
\]

\smallskip
\noindent\textup{(ii) Schatten case.}
For every matrix $A\in \mathcal{M}_d$,
\[
\normsch[\infty]{A}\le \normsch[p]{A}\le e\,\normsch[\infty]{A},
\qquad
\normsch[p']{A} \le \normsch[1]{A} \le e\,\normsch[p']{A}.
\]
\end{prp}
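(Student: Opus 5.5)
The plan is to combine the standard comparison inequalities recalled just before the statement with the single computation $d^{1/p}=d^{1/\ln d}=e$. Note first that $d\ge 3$ gives $\ln d>1$, so $p=\ln d>1$ and $p'=\ln d/(\ln d-1)$ is a well-defined finite exponent with $1/p+1/p'=1$.

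For the vector case (i), I will use the inequality $\norm{x}_\infty\le\norm{x}_p\le d^{1/p}\norm{x}_\infty$. The left-hand bound holds because a single coordinate is dominated by the $\ell_p$ sum. The right-hand bound follows from $\sum_i\abs{x_i}^p\le d\,\norm{x}_\infty^p$. Substituting $p=\ln d$ gives $d^{1/p}=e$. For the second chain I will use $\norm{x}_{p'}\le\norm{x}_1\le d^{1-1/p'}\norm{x}_{p'}$. The left-hand bound is the monotonicity of $\ell_q$ norms in $q$. The right-hand bound is H\"older's inequality applied to $\sum_i\abs{x_i}\cdot 1$ with exponents $p'$ and $p$. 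Since $1-1/p'=1/p$, the constant is again $d^{1/p}=e$.

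For the Schatten case (ii), I will apply the vector inequalities to the singular value vector $s(A)\in\R^d$. By definition, $\normsch[q]{A}=\norm{s(A)}_q$ for every $q\in[1,\infty]$, so each Schatten inequality is literally the corresponding vector inequality for $s(A)$. It makes no difference whether the field is $\R$ or $\C$.

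I do not expect a real obstacle here. The only points that need care are checking $p>1$, so that $p'$ makes sense and H\"older's inequality applies, and recognising the identity $1-1/p'=1/p$ that turns both constants into $e$.
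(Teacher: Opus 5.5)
Your proof is correct and is essentially the argument the paper intends. The paper recalls $\norm{x}_\infty\le\norm{x}_p\le d^{1/p}\norm{x}_\infty$ and $\norm{x}_{p'}\le\norm{x}_1\le d^{1-1/p'}\norm{x}_{p'}$ just before the statement, and leaves implicit both the substitution $d^{1/\ln d}=e$ (via $1-1/p'=1/p$, after checking $\ln d>1$) and the passage to the singular value vector for the Schatten case, all of which you supply.
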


\subsection{How the trick is used}

The ideology is simple: \emph{substitute} a difficult space by a nearby space with better geometry, paying only a small (dimension-dependent) factor.

\begin{itemize}
\item For approximation problems in $\ell_\infty^d$ (or $S_\infty$), we pass to $\ell_p^d$ (or $S_p$) with $p=\ln d$.
By \Href{Proposition}{prp:l_p_trick_bounds}, this changes norms by at most a factor $e$, while $\ell_p^d$ and $S_p$ are uniformly smooth for $p<\infty$ and have explicit Carath\'eodory sequences (cf.\ \Href{Lemma}{lem:Caratheodory_sequence_l_p}).

\item For $\ell_1$-minimization (and its Schatten analogue), we use the dual substitution
\[
\ell_1^d \ \longrightarrow\ \ell_{p'}^d,
\qquad
S_1 \ \longrightarrow\ S_{p'},
\qquad
p'=\frac{\ln d}{\ln d-1},
\]
so that $p'\in(1,2]$ and the target space is uniformly convex with explicit bounds on the modulus of convexity.
This is precisely the regime where our no-dimensional Helly-type results apply.
\end{itemize}

Finally, we emphasize that this substitution trick is a general principle in finite dimensions. For any $d$-dimensional Banach space $X$, its norm can be approximated arbitrarily well by a uniformly convex and uniformly smooth norm. This follows, for instance, from results on the approximation of convex bodies by smooth ones (see, e.g., \cite{schneider2014convex}). Consequently, our approach can, in principle, be extended to any finite-dimensional setting, not just the classical $\ell_p$ and Schatten classes.

\section{Dimension strikes back. Carath\'eodory-type results and problems}
\subsection{Johnson--Lindenstrauss-type sketching}
\Href{Theorem}{thm:jl_signals_receiver} follows from the following result 
since the measure of $[0,1]^d$ is one. We note that similar results were obtained in \cite{Eskenazis2023}.
\begin{thm}[Johnson--Lindenstrauss sketching for bounded functions]\label{thm:JL_for_functions}
Let $(\Omega,\mu)$ be a probability space and let $f_1,\dots,f_n\colon \Omega\to[-1,1]$ be functions in $L_2(\Omega,\mu)$.
Fix $\varepsilon\in(0,1)$. Then there exists an integer
\[
k \ \le\ C\,\frac{\ln n}{\varepsilon^2}
\]
and vectors $x_1,\dots,x_n\in\R^k$ such that for all $i,j\in[n]$,
\[
\enorm{x_i-x_j}^2- \varepsilon \ \le\ 
\norm{f_i-f_j}_2^2 \ \le\ 
\enorm{x_i-x_j}^2 + \varepsilon .
\]
Here $C>0$ is an absolute constant.
\end{thm}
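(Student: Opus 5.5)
The plan is to realize the pairwise squared distances as the \emph{mean} of a bounded vector-valued function, and then replace that mean by an empirical average over $k$ sample points produced by the greedy Carath\'eodory lemma (\Href{Lemma}{lem:Caratheodory_sequence_l_p}). The dimension-back trick (\Href{Proposition}{prp:l_p_trick_bounds}) lets us work in $\ell_\infty^N$ at the price of a $\sqrt{\ln N}$ factor.

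\textbf{Setup.} Fix representatives of $f_1,\dots,f_n$ with values in $[-1,1]$ everywhere. Let $N:=n^2\ge 3$ and define
\[
G\colon\Omega\to\R^N,\qquad G(\omega)_{(i,j)}:=\bigl(f_i(\omega)-f_j(\omega)\bigr)^2\in[0,4].
\]
Set $m:=\int_\Omega G\di\mu$, so that $m_{(i,j)}=\norm{f_i-f_j}_2^2$ and $\norm{G(\omega)}_\infty\le 4$ for every $\omega$.

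\textbf{Step 1: the mean is in the closed convex hull.} Since $\mu$ is a probability measure and $G$ is bounded, $m$ lies in the closed convex hull of $G(\Omega)$ in $\R^N$; otherwise a separating functional would give a contradiction after integration. Fix a small $\eta>0$. By finite-dimensional Carath\'eodory, there is a point $m'\in\operatorname{conv} G(\Omega)$ with $\norm{m-m'}_\infty\le\eta$.

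\textbf{Step 2: apply the greedy Carath\'eodory lemma.} Take $p:=\ln N$ and consider
\[
Q:=\bigl\{(G(\omega)-m')/(8e)\st \omega\in\Omega\bigr\}.
\]
Each difference satisfies $\norm{G(\omega)-m'}_\infty\le 8$; in fact $\le 4$, since all coordinates lie in $[0,4]$. By \Href{Proposition}{prp:l_p_trick_bounds}, this gives $\ell_p$-norm at most $4e$, so $Q\subset\ball{}_{\ell_p^N}$, and $0\in\operatorname{conv}Q$. \Href{Lemma}{lem:Caratheodory_sequence_l_p} then yields $\omega_1,\dots,\omega_k$ with
\[
\Bigl\|\tfrac1k\sum_{l}G(\omega_l)-m'\Bigr\|_\infty\le\Bigl\|\tfrac1k\sum_{l}G(\omega_l)-m'\Bigr\|_p\le 8e\cdot 21\sqrt{\tfrac{\ln N-1}{k}}\le C_0\sqrt{\tfrac{\ln n}{k}}.
\]
Choosing $k=\lceil 4C_0^2\ln n/\varepsilon^2\rceil$ makes this at most $\varepsilon/2$. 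Taking $\eta\le\varepsilon/2$ then gives total error at most $\varepsilon$ in every coordinate.

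\textbf{Step 3: define the embedding.} Set $x_i:=k^{-1/2}\bigl(f_i(\omega_1),\dots,f_i(\omega_k)\bigr)\in\R^k$. Then
\[
\enorm{x_i-x_j}^2=\tfrac1k\sum_l G(\omega_l)_{(i,j)},
\]
and the coordinatewise bound from Step 2 is exactly the claimed two-sided inequality. This map is a point-evaluation sketch, which is linear in $f$, as \Href{Theorem}{thm:jl_signals_receiver} requires.

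\textbf{Main obstacle.} The only delicate point is Step 1. The lemma needs $0$ in the convex hull itself, not its closure, and with $L_2$ functions one must use genuine pointwise representatives so that the sampled values $f_i(\omega_l)$ make sense. Passing through the auxiliary point $m'$ with slack $\eta$ handles the closure issue. For the continuous case of \Href{Theorem}{thm:jl_signals_receiver} the representatives are canonical. The rest reduces to bookkeeping of constants.
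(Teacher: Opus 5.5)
Your proposal is correct and is essentially the paper's proof: you write the squared distances as the barycenter of the bounded vectors $G(\omega)\in[0,4]^N$, apply the greedy Carath\'eodory lemma in $\ell_p^N$ with $p=\ln N$ and transfer the bound to $\ell_\infty^N$ via \Href{Proposition}{prp:l_p_trick_bounds}, and take the sketch $x_i=k^{-1/2}\bigl(f_i(\omega_1),\dots,f_i(\omega_k)\bigr)$. Your auxiliary point $m'$ makes explicit a step the paper passes over, although in finite dimensions the barycenter already lies in $\operatorname{conv}G(\Omega)$ by a supporting-hyperplane induction on dimension, so the slack $\eta$ is harmless but unnecessary; the one bookkeeping point you share with the paper is that \Href{Lemma}{lem:Caratheodory_sequence_l_p} needs $p\ge 2$, so for small $n$ take $p=\max\{2,\ln N\}$ or pad with zero coordinates.
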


\begin{proof}
For $\omega\in\Omega$ define the vector
\[
v(\omega)\in \R^{ {n\choose 2}}
\qquad\text{by}\qquad
v(\omega)_{\{i,j\}}:=\abs{f_i(\omega)-f_j(\omega)}^2,
\quad 1\le i<j\le n.
\]
Then $v(\omega)\in[0,4]^{ {n\choose 2}}$, and
\[
\int_\Omega v(\omega)\,d\mu(\omega)
\ =\
\Big(\ \norm{f_i-f_j}_2^2\ \Big)_{1\le i<j\le n}
\ \in\ \R^{ {n\choose 2}}.
\]
In particular, the vector of all pairwise $L_2$-distances squared is the barycenter of the set $\{v(\omega):\omega\in\Omega\}$.

Apply \Href{Lemma}{lem:Caratheodory_sequence_l_p} in the space $\ell_\infty^{ {n\choose 2}}$ (via the substitution from \Href{Proposition}{prp:l_p_trick_bounds})
to obtain points $\omega_1,\dots,\omega_k\in\Omega$ such that the empirical average
\[
\frac{1}{k}\sum_{s=1}^k v(\omega_s)
\]
approximates $\int_\Omega v(\omega)\,d\mu(\omega)$ in $\ell_\infty^{ {n\choose 2}}$ up to error $\varepsilon$.
Equivalently, for all $i<j$,
\[
\enorm{
\frac{1}{k}\sum_{s=1}^k \abs{f_i(\omega_s)-f_j(\omega_s)}^2
-
\norm{f_i-f_j}_2^2
}
\le \varepsilon.
\]
Now define
\[
x_i:=\frac{1}{\sqrt{k}}\big(f_i(\omega_1),\dots,f_i(\omega_k)\big)\in\R^k.
\]
Then
\[
\enorm{x_i-x_j}^2=\frac{1}{k}\sum_{s=1}^k \abs{f_i(\omega_s)-f_j(\omega_s)}^2,
\]
and the desired inequality follows.
\end{proof}
\subsection{Positive semidefinite matrix sparsification}

We would like to point out another problem that might be approachable via our method.

\begin{Problem}\label{prob:psd_sparsification}
Let $A_1,\dots,A_n\in \mathcal{M}_d$ be positive semidefinite and suppose that
\[
\lambda_1A_1+\cdots+\lambda_nA_n=\id_d,
\]
where the coefficients $\lambda_i$ are nonnegative and summing up to one.
Given $\varepsilon\in(0,1)$, determine the smallest $k$ for which there exist indices $i_1,\dots,i_k\in[n]$
and positive coefficients $\beta_1,\dots,\beta_k$ such that
\[
\Bigl\|\beta_1A_{i_1}+\cdots+\beta_kA_{i_k}-\id_d\Bigr\|_{S_\infty}\le \varepsilon.
\]
\end{Problem}

This is a no-dimensional Carath\'eodory-type problem.
Two landmark results frame the landscape.
Rudelson~\cite{rudelson1999random} showed that $C(\varepsilon)\,d\ln d$ terms suffice (via random averaging) for positive semidefinite decompositions without any a priori rank restriction.
Batson, Spielman, and Srivastava~\cite{BSS14} gave a deterministic construction with $Cd$ terms for sums of rank-one projectors,
a breakthrough closely connected to the resolution of the Kadison--Singer problem.

In~\cite{IVANOV2020108684} we showed that, when the ranks are unrestricted, no $\varepsilon$-approximation is possible with fewer than $c(\varepsilon)\,d\ln d$ terms.
In particular, the logarithmic factor appears naturally, and Rudelson's bound is tight up to constants.
Moreover, Rudelson's argument relies on the same conceptual step of passing from $S_\infty$ to $S_p$,
but it does so in a probabilistic way via type.
It would be very interesting to obtain a deterministic analogue of Rudelson's result.

\begin{conj}\label{conj:deterministic_rudelson}
Let $A_1,\dots,A_n\in \mathcal{M}_d$ be positive semidefinite and suppose that
\[
\lambda_1A_1+\cdots+\lambda_nA_n=\id_d,
\]
where the coefficients $\lambda_i$ are nonnegative and summing up to one.
Then for every $\varepsilon\in(0,1)$ there exists a deterministic algorithm that selects indices
$i_1,\dots,i_k\in[n]$ with $k\le C(\varepsilon)\,d\ln d$ and outputs positive coefficients $\beta_1,\dots,\beta_k$
such that
\[
\Bigl\|\beta_1A_{i_1}+\cdots+\beta_kA_{i_k}-\id_d\Bigr\|_{S_\infty}\le \varepsilon.
\]
\end{conj}

As a separate problem, we would like to understand whether a linear number of summands suffices under additional structure.
For example, is $k=Cd$ possible when each $A_i$ is an orthogonal projection of rank~$2$?

\section{Dimension strikes back. Helly-type results and problems}
\label{sec:l1-to-lpprime-regression}
In this section we will obtain \Href{Theorem}{thm:chebyshev_ball} and \Href{Theorem}{thm:quantum_psd_local_to_global} using our trick from  \Href{Section}{sec:trick_dimension_back} and the corresponding no-dimensional Helly-type results. We note that the proofs are almost identical.
\subsection{Chebyshev regression}
\begin{proof}[Proof of \Href{Theorem}{thm:chebyshev_ball}]
Set
\[
f_i(x):=\abs{\iprod{a_i}{x}-b_i},\qquad i\in[m],
\]
and define the convex slabs
\[
K_i:=\braces{x\in\R^d:\ f_i(x)\le r}.
\]
The assumption says that for every $J\subset[m]$ with $\card{J}=k$ there exists
$x_J\in R\ball{}_{\ell_1^d}$ such that $x_J\in \bigcap_{j\in J}K_j$, i.e.
\begin{equation}\label{eq:kwise_intersections_ball_l1}
R\ball{}_{\ell_1^d}\cap \bigcap_{j\in J}K_j\neq \emptyset
\qquad\text{for all } J\subset[m],\ \card{J}=k.
\end{equation}

Set $p=\ln d$ and $p'=\frac{p}{p-1}=\frac{\ln d}{\ln d-1}\in(1,2]$.
By \Href{Proposition}{prp:l_p_trick_bounds}(i), $\norm{x}_{p'}\le \norm{x}_1$ for all $x\in\R^d$; hence
$R\ball{}_{\ell_1^d}\subset R\ball{}_{\ell_{p'}^d}$ and \eqref{eq:kwise_intersections_ball_l1} implies
\begin{equation}\label{eq:kwise_intersections_ball_lpprime}
R\ball{}_{\ell_{p'}^d}\cap \bigcap_{j\in J}K_j\neq \emptyset
\qquad\text{for all } J\subset[m],\ \card{J}=k.
\end{equation}

Applying the no-dimensional Helly theorem in $\ell_{p'}^d$ (cf.\ \Href{Lemma}{lem:rk_bound_lp}),
we obtain a point $x\in R\ball{}_{\ell_{p'}^d}$ such that for every $i\in[m]$, the distance in $\ell_{p'}^d$
 from $x$ to $K_i$ is at most
\[
 R\,r_k(\ell_{p'}^d)
\le 4\sqrt2\,R\,\frac{\sqrt{\ln d-1}}{\sqrt{k}}
\le 4\sqrt2\,R\,\sqrt{\frac{\ln d}{k}}.
\]
Moreover, again by \Href{Proposition}{prp:l_p_trick_bounds}(i),
\[
\norm{x}_1\le e\,\norm{x}_{p'}\le eR,
\]
so $x\in eR\ball{}_{\ell_1^d}$.

Finally, since $a_i\in[-1,1]^d$, we have $\norm{a_i}_\infty\le 1$ and thus for all $u\in\R^d$,
\[
\abs{\iprod{a_i}{u}}\le \norm{u}_1\le e\,\norm{u}_{p'}.
\]
Hence each $f_i$ is $e$-Lipschitz with respect to $\norm{\cdot}_{p'}$, and 
thus
\[
f_i(x)\le r+4e\sqrt2\,R\,\sqrt{\frac{\ln d}{k}}
\le r+21\,R\,\sqrt{\frac{\ln d}{k}}.
\]
Taking the maximum over $i\in[m]$ completes the proof.
\end{proof}

\subsection{Quantum feasibility from local consistency}

\begin{proof}[Proof of \Href{Theorem}{thm:quantum_psd_local_to_global}]
The argument is essentially identical to the proof of \Href{Theorem}{thm:chebyshev_ball}, with $\ell_1^d$ replaced by the trace class $S_1$ and $\ell_{p'}^d$ replaced by the uniformly convex Schatten class $S_{p'}$.

\smallskip
Set
\[
f_i(\rho):=\abs{\iprod{A_i}{\rho}-b_i},\qquad i\in[m],
\]
where the Frobenius (Hilbert--Schmidt) inner product is
\[
\iprod{A}{B}:=\mathrm{Tr}(A^\ast B).
\]
Since each $A_i$ is self-adjoint and $\rho$ is self-adjoint, $\iprod{A_i}{\rho}=\mathrm{Tr}(A_i\rho)\in\R$.
Define the convex sets
\[
K_i:=\braces{\rho\in\mathcal{M}_d:\ \rho\succeq 0,\ \mathrm{Tr}(\rho)=1,\ f_i(\rho)\le t}.
\]
Thus $K_i$ is the set of density matrices satisfying the $i$-th measurement constraint up to tolerance $t$.
The assumption of the theorem says that for every $J\subset[m]$ with $\card{J}=k$,
\begin{equation}\label{eq:kwise_density_feasible}
\bigcap_{j\in J}K_j\neq \emptyset.
\end{equation}

The rest of the proof is omitted.
It consists of applying the same scheme as in the proof of \Href{Theorem}{thm:chebyshev_ball}:
one uses the substitution trick
\[
S_1 \ \longrightarrow\ S_{p'},
\qquad
p'=\frac{\ln d}{\ln d-1}\in(1,2],
\]
together with \Href{Proposition}{prp:l_p_trick_bounds}(ii), and then applies a no-dimensional Helly theorem in the uniformly convex space $S_{p'}$.
The Lipschitz step uses the assumption $\normsch[\infty]{A_i}\le 1$ and the inequality
$\normsch[1]{\Delta}\le e\,\normsch[p']{\Delta}$ from \Href{Proposition}{prp:l_p_trick_bounds}(ii), yielding the stated bound.
\end{proof}

\bibliographystyle{alpha}
\bibliography{../work_current/uvolit}

\end{document}